\title{Introduction to the $p$-adic Space}
\author{Joel P. Abraham}
\begin{document}
\maketitle

\section{Introduction}

From a young age, students learn fundamental operations with the real numbers: addition, subtraction, multiplication, division, etc. We intuitively understand that distance under the reals as the absolute value function, even though we never have had a fundamental introduction to set theory or number theory. As such, I found this topic to be extremely captivating, since it almost entirely reverses the intuitive notion of distance in the reals. Such a small modification to the definition of distance gives rise to a completely different set of numbers under which the typical axioms are false. Furthermore, this is particularly interesting since the $p$-adic system commonly appears in natural phenomena. Due to the versatility of this metric space, and its interesting uniqueness, I find the $p$-adic space a truly fascinating development in algebraic number theory, and thus chose to explore it further.

\section{The $p$-adic Metric Space}
\subsection{$p$-adic Norm}
\theoremstyle{definition}
\newtheorem{definition}{Definition}[section]
\newtheorem{theorem}{Theorem}[section]
\newtheorem{corollary}{Corollary}[theorem]

\begin{definition}{}
A \textit{metric space} is a set with a distance function or \textit{metric}, $d(p, q)$ defined over the elements of the set and mapping them to a real number, satisfying:
\begin{description}[style=unboxed,leftmargin=2cm]
  \item[(a) $d(p,q)>0$ if $p\neq q$]
  \item[(b) $d(p,p)=0$]
  \item[(c) $d(p,q)=d(q,p)$]
  \item[(d) $d(p,q) \leq d(p,r) + d(r, q)$]
\end{description}
\end{definition}

The real numbers clearly form a metric space with the absolute value function as the norm such that for $p, q \in \mathbb{R}$ 
  \[d(p, q) = |p-q|.\]
 
We can modify the distance function to alter the notion of distance within a space, thus redefining closeness. In the reals, two numbers are close together if their decimal representation is similar towards the right. For example, $3.1, 3.14, 3.141, 3.1415,...$ are all growing closer to each other, while $15, 315, 1415, 31415,...$ are all growing further apart. This is simply because appending a number $n$ digits to the right of the decimal place only modifies its distance from another real number by $10^{-n}$, while appending a number $n$ digits to the left of the decimal place alters its distance from another real by $10^{n}$. The $p$-adic numbers are quite the opposite. The $p$-adic space redefines the distance function, informally, such that two numbers are far if they are similar towards the right and close if they are similar towards the left. This is made rigorous through the $p$-adic norm. 

\begin{definition}{}
We can represent any rational number $n$ as $n=\frac{p^nm}{n}$ where $p$, $m$, and $n$ are coprime. Then, the \textit{$p$-adic valuation} $v_p^\mathbb{Z}(n)=p^n$. Formally, the \textit{$p$-adic valuation} over the integers is a function $v_p^\mathbb{Z}(n): \mathbb{Z} \rightarrow \mathbb{N}$ such that  
\[v_p^\mathbb{Z}(n):= \begin{cases} 
      +\infty & n= 0 \\
     \max{\left\{\,\,v \,:\,\, p^v \,|\, n, \,v \in \mathbb{N} \right\}} & n \neq 0
   \end{cases}
\]

The $p$-adic valuation can be extended to the rationals as well, and the function $v_p(n): \mathbb{Q} \rightarrow \mathbb{Z}$ is defined as $v_p(\frac{a}{b})= v_p^\mathbb{Z}(a) - v_p^\mathbb{Z}(b)$.
\end{definition}

\begin{definition}{}
The $p$-adic valuation is an auxillary function. It is used to clarify the definition of the \textit{$p$-adic norm}, which is defined as $|\cdot|_p:\mathbb{Q}\rightarrow \mathbb{N} $ such that

\[|n|_p:= \begin{cases} 
      0 & n= 0 \\
     p^{-^{v_p(n)}} & n \neq 0
   \end{cases}
\]

\end{definition}

By establishing the definition of the $p$-adic norm, we are now able to form a metric space over the $p$-adic numbers with the distance function defined as $d(p, q)= |p-q|_p$. We can now make some observations about the nature of this space. The size of a $p$-adic number can be informally summarized as inversely proportional to the exponent of p in its prime factorization. It is clear that numbers that are larger in the real number space, per the absolute value function, tend to be  smaller under the $p$-adic metric, since these numbers will be divisible by greater powers of $p$. Note that any number that is coprime to $p$ will have size 1, since the valuation returns 0.

\subsection{Completion of $\mathbb{Q}$}

The set of rational numbers is the fundamental structure underlying much of number theory and mathematics as a whole.  

\[\mathbb{Q} = \left\{\frac{p}{q} \,\,| \,\, p,q \in \mathbb{Z},\, q \neq 0 \right\}.\]

The rationals form a field under the  addition and multiplication, satisfying the field axioms for each operation: commutativity, associativity, existence of an identity element, existence of an inverse element, and closure. However, upon examination we quickly discover inefficiencies in this field; for example, there does not exist a rational $m$ such that $m^2=2$. An irrational number, $\sqrt{2}$ is one of the many gaps in the rational field, since we can get arbitrarily close to it from within the rationals but never reach it exactly. Closing these gaps under the real distance function yields the real numbers, while closing these gaps using the $p$-adic distance function yields the $p$-adic numbers. Before we can complete the rations, however, we must introduce some important definitions relating to elementary topology.

\begin{definition}{}
For a metric space X, we define an \textit{open ball} to be an open set $B(x_0, r)$ of all points $x \in X$ centered around a point $x_0$ with \textit{radius} $r$ such that $d(x_0, x)<r$. The corresponding \textit{closed ball} is said to be the closure of an open ball.
\end{definition}

\begin{definition}{}
A \textit{neighborhood} of $x_0$ is said to be an open ball $N_r(x_0)$ centered around a point $x_0$ with \textit{radius} $r$.

\end{definition}

\begin{definition}{}
A \textit{limit point} of a set $X$ is point $p$ if every neighborhood of $p$ overlaps with $X$, that is, for every neighborhood of $p$, there exists a point $q\neq p$ such that $q \in X \cap N_r(p)$.
\end{definition}

\begin{definition}{}
A \textit{Cauchy sequence} is any sequence ${x_i}$ such that for every positive real number $\epsilon$, there exists a natural number $m$ such that $d(x_m, x_{m+n})< \epsilon$ for any $n \in \mathbb{N}$.
\end{definition}

Consider the sequence $X=\{1, 1,6, 1.61, 1.614,...\}$ in the real number space with each term including a subsequence digit of the decimal representation of $\sqrt{2}$. We can show that this is a Cauchy sequence by noting that all terms in the subsequence of $X$ beginning with $x_i$ are within $10^{1-i}$ of each other. Thus, for any $\epsilon$, we can choose $i$ such that $1-\log{\epsilon} \leq i$, which would give us a sequence $\{x_i, x_{i+1},...\}$ such that for any $n \in \mathbb{N}$, $d(x_i,\, x_{i+n}) \leq 10^{1-i} \leq \epsilon$. A limit point of this Cauchy sequence is therefore $\sqrt{2}$ since for any $r>0$, we can choose an $\alpha > 0 $ and construct a neighborhood centered around $\sqrt{2}$ such that $d(\sqrt{2}, \sqrt{2}-r-\alpha) = |r-\alpha| \in X$. This is because of the fact for the $n^{th}$ term in the sequence is within $|\sqrt{2}- x_n| \leq 10^{1-n}$. In the rational space, all the limit points such as $\sqrt{2}$ that are formed under the absolute value function are contained in the set of irrational numbers. As such, we may construct the real numbers by completing the rationals, that is, including all the limit points of $\mathbb{Q}$ to yield $\mathbb{R}$. This proof is quite complex and thus will not be covered in this paper, however, the general structure of the argument behind this proof is important since the $p$-adic numbers are constructed in the same fashion. 

While the real numbers represent the completion of $\mathbb{Q}$ under the the absolute value function which is the distance in the reals, the completion of $\mathbb{Q}$ induced by the $p$-adic metric forms the $p$-adic number set, denoted by $\mathbb{Q}_{p}$. If we only consider integral numbers, we arrive at the set of $p$-adic integers, a subring of $\mathbb{Q}_p$ denoted by $\mathbb{Z}_p$.

The formal representation of a $p$-adic number is very similar to that of a $p$-ary number (a number in base $p$), as both are represented by a power series expansion of $p$. 

\begin{definition}{}
A \textit{$p$-adic number} $\alpha$ can be represented, uniquely, for $ 0 \leq a_i < p$ as
\[\alpha = \sum_{i=n}^{\infty}a_i p^{i}\]
where the $p$-adic norm of $\alpha$, $|\alpha|=p^{-n}$.

\end{definition}

\subsection{Arithmetic Operations}

One interesting consequence of the $p$-adic metric is that numbers no longer require a sign. That is, negative numbers are defined without the need for a negative sign. To illustrate this property, consider the $10$-adic metric. In this space, considering a series of $x$ $9$s for increasing values of $x$, our number approaches $-1$. This is because 

\[|9-(-1)|_{10} =|9| = 10^{-1}, \,\,\,|99-(-1)|_{10} = |10^2| = 10^{-2},\,\,\,...\,\,\,|10^{x}-1-(-1)|=|10^x|=10^{-x}\] \[\lim_{x\to\infty}|10^{-x}|_{10} = 0\].

Thus, the distance between $999...9$ and $-1$ is considered to be $0$. Since this technique involves constantly appending to the left, in the 10-adic system, we can represent this same process involving $\lim_{x\to\infty}|10^{x}-1|_{10}$ as $...999=-1$.

This ring of $10$-adic (decadic) numbers is very useful to demonstrate the relationship between the real numbers and the $p$-adic numbers. Let us now explore the function of transformations via the fundamental operations in $10$-adic space. We can define the  basic operations in the $10$-adic space: addition and multiplication, thus they form a commutative ring (note that while $p$-adic spaces form fields for some values of $p$, all $p$-adic spaces form rings). Addition and multiplication with $p$-adic numbers function the same as with real numbers, the only difference being that $p$-adic numbers can extend infinitely to the left. To demonstrate addition, consider $...111123 + 7 = ...111130 $ and $...111123 - 7 = ...111116 $. Furthermore, the existence of unsigned negatives can be demonstrated through $....999942 + 58 = 0$; we know that $...999942$ and $58$ are additive inverses, thus we consider $...999942 = -58$. Multiplication is also defined ordinarily, with $...111123 \cdot 7 = ...777961 $. We also have $...6667 \cdot 3 = ...0001$, which implies that $...6667$ and $3$ are multiplicative inverses, meaning that we consider $...6667 = \frac{1}{3}$. In this instance, division is defined, however, we cannot define division for any two $p$-adic numbers, since it is possible to have  two numbers $x, y$ for which $x \cdot y = 0$.

\subsection{Properties of $p$-adic Space}
We will now explore various properties of the set of $p$ adic numbers that emerge from our redefinition of the distance function. This section introduces some fundamental theorems in $p$-adic analysis that are incredibly useful for manipulating numbers within $p$-adic space.

\begin{theorem}

The $p$-adic norm satisfies the \textit{strong triangle inequality}:
\[|x+y| \leq max{\{|x|, |y|\}}\]

\end{theorem}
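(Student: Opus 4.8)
The plan is to transfer the problem from the multiplicative language of the norm into the additive language of the valuation, where the inequality becomes transparent. Since $|x|_p = p^{-v_p(x)}$ and the map $t \mapsto p^{-t}$ is strictly decreasing, the desired bound $|x+y|_p \le \max\{|x|_p,|y|_p\}$ is equivalent to the valuation statement
\[v_p(x+y) \ge \min\{v_p(x), v_p(y)\},\]
because the larger of two norms corresponds to the smaller of the two valuations. I would first dispose of the degenerate cases: if either $x=0$ or $y=0$ the claim is immediate, and if $x+y=0$ then $|x+y|_p = 0$ is trivially dominated by the maximum.

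For the main case I would write the two nonzero rationals in reduced $p$-adic form, $x = p^{a}\frac{m}{n}$ and $y = p^{b}\frac{s}{t}$, where $p$ divides none of $m,n,s,t$, so that $a = v_p(x)$ and $b = v_p(y)$. Assuming without loss of generality that $a \le b$, I would factor out the smaller power of $p$ and combine over a common denominator,
\[x+y = p^{a}\left(\frac{m}{n} + p^{b-a}\frac{s}{t}\right) = p^{a}\cdot\frac{mt + p^{b-a}ns}{nt}.\]
The denominator $nt$ is coprime to $p$, so $v_p(x+y)$ equals $a$ plus the valuation of the integer numerator $mt + p^{b-a}ns$. Since that numerator is an integer, its valuation is nonnegative, yielding $v_p(x+y) \ge a = \min\{a,b\}$, which is exactly what is needed.

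The one place demanding care — and the closest thing to an obstacle — is the boundary between the strict and non-strict cases. When $b > a$ the numerator satisfies $p \nmid mt$ while $p \mid p^{b-a}ns$, so the numerator is coprime to $p$ and equality $v_p(x+y) = a$ holds. When $a = b$ the numerator becomes $mt + ns$, which may itself be divisible by $p$, so cancellation can occur and the valuation may strictly exceed $a$; this is precisely the source of the inequality rather than equality in the statement. I would emphasize this phenomenon, since it explains why only a one-sided bound is claimed and foreshadows the ultrametric fact that equality holds whenever $|x|_p \neq |y|_p$. Translating the valuation inequality back through $t \mapsto p^{-t}$ then gives $|x+y|_p \le p^{-\min\{a,b\}} = \max\{|x|_p,|y|_p\}$, completing the argument.
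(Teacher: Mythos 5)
Your proof is correct and follows essentially the same route as the paper's: write $x = p^{a}m/n$, $y = p^{b}s/t$ with everything else coprime to $p$, assume $a \le b$, factor out $p^{a}$, and show the remaining factor has nonnegative valuation. One point in your favor: by combining over the common denominator $nt$ (coprime to $p$) and observing that the numerator $mt + p^{b-a}ns$ is an \emph{integer}, you give a rigorous justification of the key step $v_p\bigl(\frac{m}{n} + p^{b-a}\frac{s}{t}\bigr) \ge 0$, whereas the paper asserts this by claiming the image of $v_p$ lies in $\mathbb{N}$ --- which is true only for the valuation on $\mathbb{Z}$, not on $\mathbb{Q}$ --- so your version actually closes a gap in the paper's argument.
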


\begin{proof}
    For $x=0$ or $x=y$ we have $|y|_p = |y|_p$, thus we need to prove the statement true for $x, y \neq 0$ and $x \neq y$. Without loss of generality, assume $|x|_p>|y|_p$. Since $x=\frac{p^{a}q}{r}$ and $x=\frac{p^{b}s}{t}$ for some $a, b, p, q, s, t \in \mathbb{N}$ with coprime $p, q, r$ and $p, s, t$, implies $|x|_p=p^{-a}$  and $|y|=p^{-b}$, we must have $a\leq b$. Thus, we need to show that $|x+y|_p \leq p^{-a}$. We can write this as
    
    \[|\frac{p^{a}q}{r}+\frac{p^{b}s}{t}| \leq p^{-a} \,\,\implies\,\, |p^{a}(\frac{q}{r}+\frac{(p^{b-a})s}{t})| \leq p^{-a}\].
    
    We observe that the $p$-adic valuation of a number $n$ is the exponent of $p$ in the prime factorization of $n$; let $A=\frac{q}{r}+\frac{(p^{b-a})s}{t}$, then we have
    
    \[|p^{a}A| = p^{-v_{p}(p^{a}A)} = p^{-a \cdot v_{p}(A)} = p^{-a}\cdot p^{-v_{p}(A)}\].
    
    Since the image $v_{p}(n)$ is the natural numbers, $-v_{p}(n) \leq 0$, thus $p^{-v_{p}(A)} \leq 1$ so 
   \[ |x+y| = p^{-a}\cdot p^{-v_{p}(A)} \leq p^{-a}\].
\end{proof}

\begin{corollary}

The strong triangle inequality clearly implies the normal \textit{triangle inequality}, that is
\[|x+y| \leq |x|+|y|\]

\end{corollary}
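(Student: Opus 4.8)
The plan is to derive the ordinary triangle inequality directly from the strong triangle inequality established in Theorem 2.1, using only the elementary fact that the maximum of two nonnegative quantities never exceeds their sum. First I would invoke the strong triangle inequality to write
\[|x+y| \leq \max\{|x|, |y|\},\]
so that the remaining task is simply to bound the right-hand side by $|x| + |y|$.

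To do this, I would observe that the $p$-adic norm takes only nonnegative values: by the definition of the norm, $|n|_p = 0$ when $n = 0$ and $|n|_p = p^{-v_p(n)} > 0$ otherwise, so in particular $|x| \geq 0$ and $|y| \geq 0$. Given two nonnegative reals $a$ and $b$, whichever one attains the maximum is itself at most $a + b$, since the other summand contributes a nonnegative amount; hence $\max\{a, b\} \leq a + b$. Applying this with $a = |x|$ and $b = |y|$ yields $\max\{|x|, |y|\} \leq |x| + |y|$.

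Chaining the two inequalities then gives $|x+y| \leq \max\{|x|, |y|\} \leq |x| + |y|$, which is exactly the triangle inequality. There is essentially no obstacle to overcome here: the entire content of the corollary is the passage from the maximum on the right-hand side to the sum, and the only fact it rests upon is the nonnegativity of the norm. Since the strong triangle inequality is a strictly sharper bound, the ordinary inequality follows as an immediate weakening of it.
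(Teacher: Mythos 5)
Your proposal is correct and follows the same route as the paper: apply the strong triangle inequality and then bound $\max\{|x|,|y|\}$ by $|x|+|y|$. The only difference is that you explicitly justify the second step via the nonnegativity of the norm, which the paper leaves implicit.
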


\begin{proof}
    This is trivial with Theorem 2.1, since we have
    \[|x+y| \leq max{\{|x|, |y|\}} \leq |x|+|y|\]
\end{proof}

\begin{theorem}[\textbf{Product Formula}]

For $\alpha \in \mathbb{Q}$, $\alpha \neq 0$, let $v$ represent the set of prime numbers and infinity $v=\{\infty, 2, 3, 5, 7,...\}$ and define the $\infty$-adic norm to be the real absolute value function such that $|\alpha|_{\infty}=\alpha$ \cite{gouvea}. Then,
\[\prod_{v} |\alpha|_{v} =1\]

\end{theorem}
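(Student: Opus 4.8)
The plan is to exploit the fact that both the individual norms and the entire product are \emph{multiplicative}, so that it suffices to verify the identity on a generating set of the multiplicative group $\mathbb{Q}^\times$. First I would establish that each norm satisfies $|\alpha\beta|_v = |\alpha|_v\,|\beta|_v$: for the $p$-adic norms this follows at once from the additivity of the valuation, $v_p(\alpha\beta) = v_p(\alpha) + v_p(\beta)$, which is just the statement that exponents add when prime factorizations are multiplied, and for $|\cdot|_\infty$ it is the familiar property of the absolute value. Consequently the map $\alpha \mapsto \prod_v |\alpha|_v$ is a homomorphism from $\mathbb{Q}^\times$ into the positive reals, and the claim reduces to checking that it sends each generator to $1$.

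Next I would invoke the fundamental theorem of arithmetic, which guarantees that $\mathbb{Q}^\times$ is generated by $-1$ together with the primes. It then remains to verify the formula only for $\alpha = -1$ and for $\alpha = q$ an arbitrary prime. For $\alpha = -1$ every valuation vanishes, so $|-1|_v = 1$ at every place and the product is $1$. For $\alpha = q$, the only places contributing a factor different from $1$ are $v = \infty$ and $v = q$: at infinity $|q|_\infty = q$, while $|q|_q = q^{-v_q(q)} = q^{-1}$, and $|q|_p = 1$ for every prime $p \neq q$. These two factors cancel, giving $1$ again.

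Alternatively one could argue by direct computation, which makes the cancellation transparent. Writing $\alpha = \pm\prod_p p^{e_p}$ with only finitely many $e_p \neq 0$, we have $|\alpha|_\infty = \prod_p p^{e_p}$ and $|\alpha|_q = q^{-e_q}$ for each prime $q$, so that
\[
\prod_v |\alpha|_v = \Bigl(\prod_p p^{e_p}\Bigr)\prod_q q^{-e_q} = \prod_p p^{\,e_p - e_p} = 1.
\]

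The main point to address carefully, in either version, is the well-definedness of the product: although $v$ ranges over infinitely many places, only finitely many primes divide the numerator or denominator of $\alpha$, so all but finitely many factors equal $1$ and the product is genuinely finite. Once this finiteness is noted, the remainder is essentially bookkeeping; the real content is simply that the absolute value records the product of the prime powers appearing in $\alpha$, while the $p$-adic norms record their reciprocals one prime at a time, and the two contributions cancel exactly.
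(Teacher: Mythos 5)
Your proposal is correct, and your primary argument takes a genuinely different route from the paper's. The paper argues by a direct case analysis on the place $v$: it factors $\alpha = p_1^{n_1}\cdots p_i^{n_i}$, splits the product into the places not dividing $\alpha$ (each contributing $1$), the places dividing $\alpha$ (contributing $p_1^{-n_1}\cdots p_i^{-n_i} = 1/\alpha$), and the infinite place (contributing $\alpha$), and multiplies the three subproducts. This is exactly your ``alternative'' direct computation, so that part of your write-up matches the paper. Your main argument, however --- observing that each $|\cdot|_v$ is multiplicative, so $\alpha \mapsto \prod_v |\alpha|_v$ is a homomorphism on $\mathbb{Q}^\times$, and then checking the formula only on the generators $-1$ and the primes --- is a cleaner and more structural proof. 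It buys two things the paper's version lacks: it isolates the single computation that matters (the cancellation $q \cdot q^{-1} = 1$ at the two places $\infty$ and $q$), and it handles the sign of $\alpha$ correctly via the generator $-1$, whereas the paper's assignment of $\alpha$ (rather than $|\alpha|$) to the infinite place silently assumes $\alpha > 0$. You are also right to flag the finiteness of the product as the one point needing explicit justification; the paper passes over this entirely. The only caveat is that your reduction leans on multiplicativity of $|\cdot|_p$, i.e.\ $v_p(\alpha\beta) = v_p(\alpha) + v_p(\beta)$, which the paper never states as a lemma --- but as you note, it is immediate from unique factorization, so this is not a gap.
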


\begin{proof}
    We prove this by considering three cases corresponding to the three posssible categories into which we can classify $v$: $v = \infty$, $v | \alpha$, $v \nmid \alpha$. First, represent $\alpha$ by its prime factorization, namely, $\alpha=p_1^{n_1} \cdot p_2^{n_2} \cdot p_3^{n_3}\cdot...\,p_i^{n_i}$. Then we consider the cases:
    \[\prod_{v} |\alpha|_{v} = \begin{cases} 
      1 & v \neq n_i \\
     p_1^{-n_1} \cdot p_2^{-n_2}\cdot...\,p_i^{-n_i} = \frac{1}{\alpha} & v = n_i\\
     \alpha & v=\infty
   \end{cases}\]
    
    Since these three cases encompass all possible values of $v$, we can compute the value of the infinite product by taking the product of these three subproducts, yielding
    \[\prod_{v} |\alpha|_{v} = 1\cdot \frac{1}{\alpha} \cdot \alpha =1\].
\end{proof}

This theorem is a useful result and in fact, one of the reasons why the $p$-adic metric was defined as it is. The Product Formula is quite powerful and can be used to solve a wide array of problems in number theory.

\begin{theorem}[\textbf{Ostrowski's Theorem}]

Every nontrivial norm (where a trivial norm is defined as yielding $0$ if $x=0$ or $1$ if $x \neq 0$) over the rational numbers is equal to either the real norm or the $p$-adic norm

\end{theorem}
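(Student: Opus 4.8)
The plan is to split into two mutually exclusive cases according to whether the norm is \emph{Archimedean} or \emph{non-Archimedean}, that is, according to whether or not the values $|n|$ on the integers are bounded. Since any norm is multiplicative and satisfies $|-x| = |x|$, it is completely determined by its values on the positive integers, so throughout I would reduce every question about $\mathbb{Q}$ to a question about $\mathbb{N}$. I would first record the elementary facts $|1| = 1$ (from $|1| = |1|^2$) and $|n| \le n$ for positive integers (by iterating the triangle inequality), which feed into both cases. I should also note that the precise conclusion is that the norm equals a fixed positive power of the real norm or of some $|\cdot|_p$; since raising a norm to a power yields an equivalent norm inducing the same topology, this is the sense in which the classification is complete.

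In the Archimedean case I would suppose some integer has norm exceeding $1$ and let $n_0 > 1$ be the least such integer. For an arbitrary integer $b$, I would expand a high power $b^N$ in base $n_0$ and bound $|b^N|$ term by term using the triangle inequality, the bound on the base-$n_0$ digits, and the size of $|n_0|$. Taking $N$th roots and letting $N \to \infty$ should yield the inequality $|b|^{1/\log b} \le |n_0|^{1/\log n_0}$; running the identical argument with the roles of $b$ and $n_0$ reversed gives the reverse inequality, so the quantity $|a|^{1/\log a}$ is a constant independent of $a$. Writing this constant as $e^{\alpha}$ forces $|a| = a^{\alpha}$ for every positive integer, and hence $|\cdot| = |\cdot|_{\infty}^{\alpha}$ on all of $\mathbb{Q}$.

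In the non-Archimedean case I would instead suppose $|n| \le 1$ for every integer. A short lemma, estimating $|x+y|^N$ through the binomial expansion and letting $N \to \infty$, upgrades the ordinary triangle inequality to the strong triangle inequality of Theorem 2.1, so the norm is an ultrametric. Then the set $I = \{\, n \in \mathbb{Z} : |n| < 1 \,\}$ is closed under addition and under multiplication by arbitrary integers, hence is an ideal of $\mathbb{Z}$; nontriviality of the norm makes $I$ nonzero, and multiplicativity makes it prime, so $I = (p)$ for a unique prime $p$. Factoring any integer as $n = p^{k} m$ with $p \nmid m$ gives $|m| = 1$ and therefore $|n| = |p|^{k}$; writing $|p| = p^{-s}$ with $s > 0$ then yields $|n| = |n|_p^{\,s}$, so that $|\cdot| = |\cdot|_p^{\,s}$ on $\mathbb{Q}$.

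The main obstacle is the Archimedean estimate. The term-by-term bound on $|b^N|$ in base $n_0$ carries a factor that grows only polynomially in $N$ (coming from the number of digits) alongside the dominant exponential factor, and the crux is to verify that this polynomial factor washes out after taking $N$th roots and passing to the limit, leaving exactly the clean inequality between $|b|^{1/\log b}$ and $|n_0|^{1/\log n_0}$. By contrast, once the strong triangle inequality is in hand, everything in the non-Archimedean case is essentially algebraic and should follow routinely.
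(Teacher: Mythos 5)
The paper does not actually prove Ostrowski's Theorem --- it only states it and discusses its consequences --- so there is no internal argument to compare yours against. Your proposal is the standard proof and its outline is correct: the dichotomy between the Archimedean case (some integer has norm greater than $1$) and the non-Archimedean case (all integers have norm at most $1$) is exactly the right decomposition, the base-$n_0$ expansion of $b^N$ with the polynomial digit-count factor washing out under $N$th roots is the classical Archimedean argument, and the identification of $\{\,n \in \mathbb{Z} : |n| < 1\,\}$ as a nonzero prime ideal $(p)$ is the classical non-Archimedean argument. Two points deserve care when you write it out. First, in the reversal step of the Archimedean case, the symmetric expansion only gives $|n_0| \leq \max(1, |b|)^{\log n_0 / \log b}$; you must first observe that $|n_0| > 1$ forces $\max(1,|b|) > 1$, hence $|b| > 1$, before you can drop the $\max$ and obtain the clean reverse inequality. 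Second, the upgrade to the strong triangle inequality is genuinely needed (not optional) in the non-Archimedean case, since the ordinary triangle inequality alone does not show that $I$ is closed under addition. You are also right that the theorem as stated in the paper is slightly imprecise: the correct conclusion is equality with a positive power of $|\cdot|_{\infty}$ or of some $|\cdot|_p$, i.e., equivalence of norms, and your proposal handles this correctly where the paper's statement glosses over it.
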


This theorem essentially implies that the only two possible completions of the rations are the reals and the $p$-adic numbers. Recall that the rationals are completed by including the limit points of all Cauchy sequences in the rationals, and note that the only difference between the real completion and the $p$-adic completion is the differing distance function. Such a discovery has profound implications for algebraic number theory and thus represents a deep result. Ostrowski's Theorem implies that the reals and $p$-adic numbers are each powerful rings, thus mathematicians find it useful to synthesize these into a single structure. This simplifies manipulation of a number in $p$-adic space and lets us examine the influence of modifying the value of $p$.

\begin{definition}{}
The \textit{ring of adeles} is the set $\mathbb{A}$, given a certain number $\alpha$, in $p$-adic space for all $p$ with

\[\mathbb{A}  = \{\alpha_{\infty}, \alpha_2, \alpha_3, \alpha_5,...\}\].

\end{definition}

\section{Applications of $p$-adic Numbers}

While the $p$-adic space is undoubtedly fascinating, one cannot help but interrogate its pragmatic value. This section will survey the various applications of $p$-adic theory (outside of number theory and pure mathematics) in physics and computer science, and briefly detail the value of $p$-adic numbers. Somewhat counterintuitively, the $p$-adic space emerges consistently in natural phenomena.

\subsection{Quantum Mechanics}

There are a few import distinctions between the real numbers and the $p$-adic numbers that become import when we consider their respective axiomatic systems as a framework for modeling the universe, that is, when we apply them to physics. \cite{rozikov} One key distinction is that we say the real numbers are Archimedean, since they satisfy the Archimedean property: informally, given any real numbers $x$ and $y$, we can add $x$ to itself enough times until it exceeds $y$. Formally, $nx > y$ for $n \in \mathbb{N}$ and $x, y \in \mathbb{R}$. However, the $p$-adic numbers are so-called non-Archimedean, since they satisfy the ultra-metric triangle inequality, or strong triangle inequality, and therefore does not satisfy the Archimedean property. This can be trivially shown.

\begin{theorem}

The $p$-adic numbers are non-Archimedean, that is, for any $x, y \in \mathbb{R}$ we cannot choose $n \in \mathbb{N}$ such that
\[nx > y\].

\end{theorem}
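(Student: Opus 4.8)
The plan is to begin by confronting a subtlety in the statement itself: the field $\mathbb{Q}_p$ admits no total order compatible with its arithmetic (the variables should read $x, y \in \mathbb{Q}_p$ rather than $\mathbb{R}$), so the literal inequality $nx > y$ has no meaning there. The only faithful rendering of the Archimedean property in this setting compares \emph{sizes}: it asks whether repeatedly adding $x$ to itself can produce a multiple whose $p$-adic norm eventually surpasses that of a fixed target. Accordingly, I would recast the claim as the assertion that there exist nonzero $x, y \in \mathbb{Q}_p$ for which no $n \in \mathbb{N}$ satisfies $|nx|_p > |y|_p$; exhibiting such a pair negates the Archimedean property outright.

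The heart of the argument is a uniform bound on $|nx|_p$ that does not grow with $n$. Writing each multiple as a repeated sum $nx = \underbrace{x + x + \cdots + x}_{n}$ and applying the strong triangle inequality of Theorem 2.1 inductively gives
\[
|nx|_p \le \max\{\,|x|_p,\, |x|_p,\, \ldots,\, |x|_p\,\} = |x|_p
\]
for every $n \in \mathbb{N}$. Thus the norms of the multiples $x, 2x, 3x, \dots$ are all trapped beneath $|x|_p$, independently of $n$ --- in sharp contrast to the real absolute value, where $|nx|$ diverges.

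Finally I would select any $y$ whose norm exceeds $|x|_p$, for instance $y = x/p$, which satisfies $|y|_p = p\,|x|_p$; then $|nx|_p \le |x|_p < |y|_p$ for every $n$, producing a pair that no natural multiple can overtake. I expect the principal obstacle to be expository rather than computational: the genuine difficulty lies in recognizing that the textbook phrasing via $>$ is ill-posed over $\mathbb{Q}_p$ and translating it faithfully into a statement about the norm. Once that reinterpretation is in place, the only step demanding real care is the induction that lifts the two-term strong triangle inequality to the $n$-term bound above, and this is entirely routine.
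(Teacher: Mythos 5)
Your proposal rests on exactly the same mechanism as the paper's argument: applying the strong triangle inequality of Theorem 2.1 to the repeated sum $x + x + \cdots + x$ to conclude that $|nx|_p$ never exceeds $|x|_p$. The only difference is completeness --- the paper stops at the informal $n=2$ observation $|2x|_p \le |x|_p$ and says this ``hints at'' the result, whereas you correctly reinterpret the (ill-posed, order-based) statement in terms of norms, carry out the induction for general $n$, and exhibit a witnessing pair $y = x/p$; this is the same approach, fully executed.
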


This is a corollary of the strong triangle inequality, since, informally, if we set $x=y$, \[|x+y| \leq |x|\,\, \implies \,\,|2x| \leq |x|,\] which hints at the fact that adding $x$ to itself decreases its value under the $p$-adic metric. 

This is useful in quantum mechanics, since it allows us to represent measurements that would ordinarily be small, within real spacetime, as large values in $p$-adic spacetime. As such, $p$-adic numbers have tremendous value for physicists attempting to develop a mathematical framework for spacetime in distances near the Planck length, where our typical standards of measurement fail. 

\subsection{Computer Science}
One excellent example of the utility of $p$-adic numbers is its enabling of infinite precision arithmetic. Traditional methods of computing use the real numbers and include a margin of error. Numbers are represented via floating points, and therefore manipulation is considered to be finite precision arithmetic due to roundoff error. For example, computers represent $\pi$ to a certain number of digits, thus operations with $\pi$ are never truly precise. However, representing numbers in the $p$-adic space, we can transform some numbers that would extend infinitely in its real decimal representation into a number of finite length. This feature can be exploited for error-free arithmetic. Furthermore, $p$-adic numbers are useful for algorithms for fast integer multiplication as demonstrated by \cite{de}.

The field of $p$-adic numbers has no order structure, which is a surprisingly useful property in cryptography and cryptanalysis. While an order is defined on the real numbers, we cannot define a linear ordering over the set $\mathbb{Q}_p$. This  can be summarized as a consequence of the proof that $\mathbb{Q}_p$ must contain transcendental (non-algebraic) elements, since while $\mathbb{Q}_p$ is uncountable, the algebraic elements of $\mathbb{Q}_p$ are countable, implying the existence of non-algebraic elements. This lack of structure is valuable in the context of public key cryptography -- the more a certain transformation obfuscates a structure, the more encrypted it is, and thus the more difficult it is to invert the operation. The chaos inherent in the $p$-adic numbers is a significant benefit in the context of cryptography, and much research at the intersection of math and computer science is devoted to exploring cryptography on elliptic curves over $p$-adic number fields. In particular, many papers introduce cryptosystems defined over the quotient groups of an elliptic curve's rational points, over a $p$-adic field.

\section{Reflection}

Given the fascinating, numerous applications of $p$-adic numbers, they represent a profound development in algebraic number theory. I discussed the applications of $p$-adic numbers in the context of physics and computer science, but there are many more uses of $p$-adic numbers in the realm of number theory, most notably in the proof of Fermat's Last Theorem. 

Throughout this investigation, I have learned a lot about elementary topology and the various ways in which distance is used to construct number sets. I also gained insight into proof structures and logical continuity. Before this investigation, I was not familiar at all with the rigorous definitions of the absolute value function, the real numbers, the rational numbers, metric spaces, and the $p$-adic space. Due to the thorough research I conducted in this area and the depths of my investigation, I have realized profound things about the nature of reality, which are fundamentally interdisciplinary and can be applied to literature, philosophy, history, etc. From this investigation, I realized that the framework of math is, at least somewhat, arbitrarily defined and thus inherently dynamic, that we can redefine everything and thus alter the structure of mathematics at its core.

\bibliographystyle{alpha}
\medskip

\end{document}